\newtheorem{theorem}{Theorem}[section]
\newtheorem{lemma}[theorem]{Lemma}
\newtheorem{definition}[theorem]{Definition}
\newtheorem{corollary}[theorem]{Corollary}
\newtheorem{remark}[theorem]{Remark}
\newtheorem{example}[theorem]{Example}
\newcommand{\RR}{\mathbb{R}}
\newcommand{\FF}{\mathbb{F}}
\newcommand{\PP}{\mathbb{P}}
\newcommand{\E}{\mathcal{E}}
\newcommand{\G}{G(n,n-m)}
\newcommand{\A}{A(n,m)}
\newcommand{\F}{\mathbb{F}_{p}^{n}}
\newcommand{\ee}{e^{-\frac{2\pi i  x\cdot\xi}{p}}}
\title[Restricted families of projections]{Restricted families of projections in vector space over finite fields}
\author{Changhao Chen}
\address{School of Mathematics and Statistics, The University of New South Wales, Sydney NSW 2052, Australia }
\email{changhao.chenm@gmail.com}
\date{\today}
\subjclass[2010] {05B25, 28A75}
\keywords{Projections, random subspaces, finite fields}
\thanks{The author acknowledges the support of the Vilho, Yrj\"o, and Kalle V\"ais\"al\"a foundation.}
\begin{document}

\begin{abstract}
We study the restricted families of projections in  vector spaces over finite fields. We show that there are  families of  random subspaces which admit a Marstrand-Mattila type projection theorem. 
\end{abstract}

\maketitle

\section{Introduction}
A fundamental problem in fractal geometry is to determine  how the projections affect dimension. Recall the classical Marstrand-Mattila projection theorem: Let $E\subset \RR^{n}, n\geq2,$ be a Borel set with Hausdorff dimension $s$. 
\begin{itemize}
\item (dimension part) If $s\leq m$, then the orthogonal projection of $E$ onto almost all $m$-dimensional subspaces has Hausdorff dimension $s$.

\item (measure part) If $s>m$, then the orthogonal of $E$ onto almost all $m$-dimensional subspaces has positive $m$-dimensional Lebesgue measure. 
\end{itemize}

In 1954 J. Marstand \cite{Marstrand} proved this projection theorem
in the plane. In 1975 	P. Mattila \cite{Mattila1975} 
proved this  for general dimension via   1968 R. Kaufman's  \cite{Kaufman} potential theoretic methods.   We refer to the recent survey of  K. Falconer, J. Fraser, and X. Jin \cite{Falconer}  for more backgrounds. 

Recently there has been a growing interest in studying finite field version of some classical problems arising from Euclidean spaces. For instance, there are finite field  Kakeya sets (also called Besicovitch sets), see Z. Dvir \cite{Dvir}; there are  finite field Erd\H{o}s/ Falconer distance problem, see  A. Iosevich, M. Rudnev \cite{IosevichRudnev}, T. Tao \cite{Tao1}; etc. Motivated by the above works, the author \cite{ChenP} studied the  projections in vector spaces over finite fields, and obtained the Marstrand-Mattila type projection theorem in this setting. In this paper, we turn to the restricted families of projections in the vector spaces over finite fields.  For more details on projection in vector space over finite fields see \cite{ChenP}. For more backgrounds on restricted families of projections in Euclidean spaces, we refer to \cite[Section 6]{Falconer}, \cite{FOO}, \cite{KOV} and reference therein.

Let $p$ be a prime number, $\mathbb{F}_{p}$ be the finite field with $p$ elements, and $\F$ be the $n$-dimensional vector space over this field. We use the same notation as in the Euclidean spaces. Let $G(n,m)$ be the collection of all $m$-dimensional linear subspaces of $\F$, and  $\A$ be the family of all $m$-dimensional planes, i.e., the translation of some $m$-dimensional subspace.  In the following we show the definition of projections in $\F$, see \cite{ChenP} for more details.
 
\begin{definition}
Let $E$ be a subset of  $\F$ and $W$ be a non-trivial subspace of $\F$. Denoted by $\pi^{W}(E)$ the collection of coset of $W$ which intersects $E$, that is 
\[
\pi^{W}(E)=\{x+W: E\cap (x+W) \neq \emptyset, x\in \F\}.
\]
In this paper we are interested in the cardinality of $\pi^{W}(E)$.  
\end{definition}

For any set $E \subset \F$ and $W\in G(n,n-m)$ the  Lagrange's group theorem implies 
\[
|\pi^{W}(E)|\leq \min\{|E|, p^{m}\}.
\]
Here and in the following let $|J|$ denote the cardinality of set $J$. The author \cite[Corollary 1.3]{ChenP}  obtained the following Marstrand-Mattila type projection theorem in $\F$. In fact the following form is often called the size of the exceptional sets of projections.

\begin{theorem}\label{thm:ChenP}
Let $E \subset \F$ with $|E|=p^{s}$. 

(a) If $s\leq m$ and $t \in (0, s]$, then
\[
| \{W \in  G(n,n-m) : |\pi^{W} (E)| \leq p^{t}/10 \} \leq \frac{1}{2} p^{m(n - m) -(m - t)}.
\]

(b) If $s> m$, then
\[
| \{W \in  G(n,n-m) : |\pi^{W} (E)| \leq   p^{m}/ 10 \}| \leq \frac{1}{2} p^{m(n - m) -(s-m)}.
\]
\end{theorem}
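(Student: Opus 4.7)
My starting point is the Cauchy--Schwarz inequality
\[
|\pi^W(E)| \ge \frac{|E|^2}{N_W}, \qquad N_W := |\{(x_1,x_2)\in E\times E : x_1-x_2\in W\}|,
\]
obtained by comparing the $\ell^1$ and $\ell^2$ norms of the fibre-counting function $c\mapsto|E\cap c|$ on $\F/W$. The plan is to convert the hypothesis $|\pi^W(E)|\le p^t/10$ (resp.\ $\le p^m/10$) into a lower bound for $N_W$, to upper bound $\sum_{W\in G(n,n-m)}N_W$ by a double-counting / orbit-counting identity, and finally to conclude by Markov's inequality. The two parts of the theorem then diverge in how $\sum_W N_W$ is estimated.

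For part (a), where $|E|=p^s\le p^m$ is small, I would split
\[
N_W = |E|+\sum_{v\in W\setminus\{0\}} r_E(v), \qquad r_E(v):=|\{(x_1,x_2)\in E^2:x_1-x_2=v\}|.
\]
Each non-zero $v\in\F$ lies in exactly $|G(n,n-m)|(p^{n-m}-1)/(p^n-1)$ members of $G(n,n-m)$ by homogeneity, so
\[
\sum_W \sum_{v\in W\setminus\{0\}} r_E(v) \;=\; |G(n,n-m)|\,\frac{p^{n-m}-1}{p^n-1}\,(|E|^2-|E|).
\]
Since $t\le s$, the diagonal contribution $|E|$ to $N_W$ is negligible against the forced lower bound $10|E|^2/p^t$, and Markov's inequality yields an exceptional set of size a constant times $p^{m(n-m)-(m-t)}$. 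The factor $10$ in the hypothesis should leave enough slack to absorb the absolute constants in $|G(n,n-m)|\le C\,p^{m(n-m)}$ and to deliver the $\tfrac{1}{2}$ prefactor.

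For part (b), where $|E|=p^s>p^m$, the same combinatorial estimate is lossy by exactly the factor $p^{s-m}$ appearing in the statement, so a Fourier estimate is needed. Writing $f_W:=\mathbf{1}_E\ast \mathbf{1}_W$, one has $\widehat{f_W}(\xi)=p^{n-m}\widehat{\mathbf{1}_E}(\xi)\mathbf{1}[\xi\in W^\perp]$, and Parseval yields
\[
N_W \;=\; p^{-m}|E|^2+p^{-m}\sum_{\xi\in W^\perp\setminus\{0\}}|\widehat{\mathbf{1}_E}(\xi)|^2 .
\]
The hypothesis $|\pi^W(E)|\le p^m/10$ now forces the remaining sum to be at least a constant multiple of $|E|^2$. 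A non-zero $\xi$ lies in $W^\perp$ iff $W\subset \xi^\perp$, so the number of admissible $W\in G(n,n-m)$ equals $|G(n-1,n-m)|=|G(n,n-m)|\,(p^m-1)/(p^n-1)$. Combining this with the Plancherel identity $\sum_{\xi\ne 0}|\widehat{\mathbf{1}_E}(\xi)|^2\le p^n|E|$ and Markov gives an exceptional set of size at most a constant times $p^{m(n-m)-(s-m)}$, as required.

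The main conceptual obstacle is recognising that the small-$E$ and large-$E$ regimes are controlled by genuinely different quantities: the additive energy of $E$, captured by the difference function $r_E$, dominates in case (a), whereas the whole Fourier spectrum of $E$ is needed in case (b) because the Plancherel identity $\sum_\xi|\widehat{\mathbf{1}_E}(\xi)|^2=p^n|E|$ saves the crucial factor of $p^n/|E|$ over the trivial bound $|E|^2$. The remaining difficulty is routine but careful bookkeeping of absolute constants (sizes of the Gaussian binomials, powers of $p$ in $\widehat{\mathbf{1}_W}$, and the factor $10$ in the hypothesis) to certify the claimed $\tfrac{1}{2}$ prefactor.
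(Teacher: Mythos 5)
Your proposal is correct and takes essentially the same route as the paper's machinery (and the argument behind the quoted result from \cite{ChenP}): your Cauchy--Schwarz bound $|\pi^{W}(E)|\geq |E|^{2}/N_{W}$ is exactly estimates \eqref{eq:pairss}--\eqref{eq:argument}, your incidence counts for $v\in W$ and $\xi\in Per(W)$ are Lemma \ref{lem:c} with the correct Gaussian-binomial ratios, and your Parseval identity for $N_{W}$ is Lemma \ref{lem:fff}. In particular, you correctly isolate the one non-routine point: in part (a) the diagonal term $|E|$ must be subtracted and absorbed by the threshold $10|E|^{2}p^{-t}$ (which uses $t\leq s$) before Markov, which is what upgrades the naive exponent $m(n-m)-(s-t)$ to the stated $m(n-m)-(m-t)$, and the resulting constant (roughly a Gaussian-binomial constant divided by $9$) does deliver the prefactor $\tfrac12$.
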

We note that  $|G(n,m)|\approx p^{m(n-m)}$, see P. Cameron \cite[Theorem 6.3]{Cameron}.  We write $f\lesssim g$ if there is a positive constant $C$ such that $f\leq Cg$, $f\gtrsim g$ if $g\lesssim f$, and $f\approx g$ if $f\lesssim g$ and $f\gtrsim g$.   

In the following, we formulate finite fields version of   restricted families of projections.  Let $G$ be a subset of $G(n,k)$, then $(\pi^{W})_{W\in G }$ is called a restricted family of projection. The purpose of this paper is looking for subsets $G\subset G(n,k)$ such that $(\pi^{W})_{W\in G }$ admit a Marstrand-Mattila type projection theorem.

By studying the  random subsets of $G(n, n-m)$, we  obtain the following result.  

\begin{theorem}\label{thm:main}
For any $  \min\{m, n-m\} <\alpha \leq  m(n-m)$, there exists a subset $G\subset G(n,n-m)$ with $|G|\approx p^{\alpha}$ such that  for any $E\subset \F$,
\begin{equation*}\label{eq:eee}
 |\{W\in G: |\pi^{W}(E)|\leq N\}|\lesssim |G|N(|E|^{-1}+p^{-m}).
\end{equation*} 
\end{theorem}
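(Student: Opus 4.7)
The plan is to choose $G$ as a random Bernoulli subset of $G(n,n-m)$ and reduce the uniform control over all $E\subset\F$ to a pointwise property of $G$ indexed by $\F\setminus\{0\}$, rather than by the infeasibly large family of subsets of $\F$. Include each $W\in G(n,n-m)$ in $G$ independently with probability $q=p^{\alpha}/|G(n,n-m)|$, so that $|G|\approx p^{\alpha}$ by standard Chernoff concentration.

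The decisive deterministic step is a Cauchy--Schwarz estimate. For $E\subset\F$ and $W\in G(n,n-m)$, set
\[
I(E,W)=\sum_{x+W}|E\cap(x+W)|^{2}=|\{(a,b)\in E\times E:a-b\in W\}|.
\]
Cauchy--Schwarz yields $|\pi^{W}(E)|\geq|E|^{2}/I(E,W)$, so the event $|\pi^{W}(E)|\leq N$ forces $I(E,W)\geq|E|^{2}/N$, and Markov's inequality over $W\in G$ produces the deterministic inequality
\[
|\{W\in G:|\pi^{W}(E)|\leq N\}|\leq\frac{N}{|E|^{2}}\sum_{W\in G}I(E,W).
\]
Expanding,
\[
\sum_{W\in G}I(E,W)=|E||G|+\sum_{v\neq 0}|E\cap(E+v)|\cdot|\{W\in G:v\in W\}|,
\]
and since $\sum_{v\neq 0}|E\cap(E+v)|\leq|E|^{2}$, the pointwise bound $|\{W\in G:v\in W\}|\leq C|G|p^{-m}$ for every nonzero $v\in\F$ implies the target estimate $|G|N(|E|^{-1}+p^{-m})$. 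A Fourier dual is also available: character orthogonality gives $I(E,W)=p^{-m}\sum_{\xi\in W^{\perp}}|\widehat{\mathbf{1}_{E}}(\xi)|^{2}$, and combined with Plancherel the same scheme works if instead $|\{W\in G:\xi\perp W\}|\leq C|G|p^{-(n-m)}$ for every nonzero $\xi\in\F$.

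Finally, I verify that the random $G$ obeys at least one of these pointwise conditions under the stated threshold. Since the fractions of $W\in G(n,n-m)$ containing a given nonzero $v$ and contained in a given hyperplane $\xi^{\perp}$ equal $p^{-m}$ and $p^{-(n-m)}$ respectively, the expected values of $|\{W\in G:v\in W\}|$ and $|\{W\in G:\xi\perp W\}|$ are $|G|p^{-m}$ and $|G|p^{-(n-m)}$. A Chernoff bound with a union bound over the $p^{n}-1$ nonzero elements of $\F$ (in sharp contrast to the $2^{p^{n}}$ subsets $E$) confirms the pair-counting condition once $|G|\gtrsim p^{m}$ and the Fourier condition once $|G|\gtrsim p^{n-m}$, up to constants depending on $n,m,p$. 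Using the pair-counting bound when $m\leq n-m$ and the Fourier bound when $m>n-m$, the hypothesis $\alpha>\min\{m,n-m\}$ is exactly what is required. The principal obstacle is identifying the Cauchy--Schwarz reduction: the map $E\mapsto|\pi^{W}(E)|$ is highly nonlinear and a direct concentration-plus-union argument over $2^{p^{n}}$ subsets $E$ is out of reach, but the inequality above factors the $E$-dependence through a sum indexed by $v\in\F\setminus\{0\}$ (or its Fourier dual), replacing an infeasible union bound with a tractable one.
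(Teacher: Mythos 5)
Your proposal is correct and follows essentially the same route as the paper's proof: the same Bernoulli random model with Chernoff concentration and a union bound over the $p^{n}-1$ nonzero vectors, the same Cauchy--Schwarz energy reduction (the paper's estimate \eqref{eq:argument}), and the same two pointwise conditions --- $|\{W\in G:\xi\in W\}|\lesssim|G|p^{-m}$ handled by double counting and $|\{W\in G:\xi\in Per(W)\}|\lesssim|G|p^{-(n-m)}$ handled via the coset--Plancherel identity of Lemma \ref{lem:fff} --- split exactly as in the paper according to whether $\alpha>m$ or $\alpha>n-m$. No gaps.
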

Note that for the case $\alpha=m(n-m)$, Theorem \ref{thm:main} follows from Theorem \ref{thm:ChenP} by choosing $G=G(n,m)$. Thus we consider the case $\min\{m, n-m\} <\alpha < m(n-m)$ only.  We immediately  have the following Marstrand-Mattila type projection theorem via the special choice $N$ in  Theorem \ref{thm:main}. 

\begin{corollary}
For any $  \min\{m, n-m\} <\alpha \leq  m(n-m)$, there exists a subset $G\subset G(n,n-m)$ with $|G|\approx p^{\alpha}$ such the following holds. Let $E\subset \F$ with $|E|=p^{s}$. 

(1) If $|E|\leq p^{m}$ and $t\in (0,s]$, then  
\begin{equation*}
 |\{W\in G: |\pi^{W}(E)|\leq p^{t}\}|\lesssim |G|p^{t-s}.
\end{equation*} 

(2) If $|E|> p^{m}$, then for any small $\varepsilon$
\begin{equation*}
 |\{W\in G: |\pi^{W}(E)|\leq \varepsilon p^{m}\}|\lesssim |G|\varepsilon.  
\end{equation*} 
\end{corollary}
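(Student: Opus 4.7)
The plan is direct: apply Theorem \ref{thm:main} with the set $G$ whose existence it already guarantees, and specialize the free parameter $N$ to each of the two regimes. No new construction is required, since the $G$ supplied by Theorem \ref{thm:main} already has $|G| \approx p^{\alpha}$ for every $\min\{m,n-m\} < \alpha \leq m(n-m)$.

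For part (1), I would first observe that $|E| = p^{s} \leq p^{m}$ forces $|E|^{-1} = p^{-s} \geq p^{-m}$, so
\[
|E|^{-1} + p^{-m} \leq 2 p^{-s}.
\]
Taking $N = p^{t}$ in Theorem \ref{thm:main} then yields
\[
|\{W \in G : |\pi^{W}(E)| \leq p^{t}\}| \lesssim |G|\, p^{t} \cdot p^{-s} = |G|\, p^{t-s},
\]
which is the claimed bound.

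For part (2), the assumption $|E| > p^{m}$ instead gives $|E|^{-1} < p^{-m}$, hence $|E|^{-1} + p^{-m} \leq 2 p^{-m}$. Choosing $N = \varepsilon p^{m}$ in Theorem \ref{thm:main} produces
\[
|\{W \in G : |\pi^{W}(E)| \leq \varepsilon p^{m}\}| \lesssim |G|\, \varepsilon p^{m} \cdot p^{-m} = |G|\, \varepsilon,
\]
as required.

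There is no substantive obstacle in this derivation. The additive structure $|E|^{-1} + p^{-m}$ in the bound of Theorem \ref{thm:main} is arranged precisely so that whichever of the two terms dominates recovers the natural projection estimate in its regime, with the threshold lying exactly at $|E| = p^{m}$. All of the real work is done in Theorem \ref{thm:main}; the corollary amounts to bookkeeping, and the implicit constants in the $\lesssim$ bounds absorb harmless factors of $2$ coming from the comparisons between $|E|^{-1}$ and $p^{-m}$.
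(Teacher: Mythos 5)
Your proposal is correct and is precisely the paper's intended argument: the paper proves no separate proof of the corollary, stating only that it follows ``immediately \ldots via the special choice $N$ in Theorem \ref{thm:main}'', and your choices $N = p^{t}$ (using $|E|^{-1} + p^{-m} \leq 2p^{-s}$ when $s \leq m$) and $N = \varepsilon p^{m}$ (using $|E|^{-1} + p^{-m} \leq 2p^{-m}$ when $|E| > p^{m}$) are exactly the substitutions the author has in mind. Nothing is missing.
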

 
For restricted families of projections in Euclidean spaces, 
the author \cite{ChenR} obtained that some random subsets of sphere of $\RR^{3}$ admit a Marstrand-Mattila type projection theorem. For more details, see \cite{ChenR}.

The structure of the paper is as follows. In Section \ref{sec:p}, we set up some notation and show some lemmas for later use. We prove Theorem  \ref{thm:main} in Section \ref{sec:proof}.  In the last Section we given some examples of restricted families of projections which admit a Marstrand-Mattila type theorem in finite fields setting.

\section{Preliminaries}\label{sec:p}

In this section we show some lemmas for later use. 

\subsection{Outline of the methods}

In short words, we take a random subset $G\subset G(n,n-m)$, see the random model in Subsection \ref{sub:rrr}. Then we estimate the cardinality of ``the exceptional sets'',
\[
\{W\in G: |\pi^{W} (E)|\leq N\},
\]
and show that it satisfies our need. To estimate the ``exceptional sets'', we adapt the arguments in \cite{ChenP} to our setting which is a variant of Orponen's pairs argument \cite[Estimate (2.1)]{OrponenA}.

Let $W\in G(n,n-m)$ then Lagrange’s group theorem implies that  there are $p^{m}$ cosets of $W$. Let $x_{W, j}+W, 1\leq j\leq p^{m}$
be the different cosets of $W$.  Let  $E\subset \FF_{p}^{n}$, then 
\[
|E| =\sum_{j=1}^{p^{m}} |E\cap (x_{W, j}+W)|,
\]
and the Cauchy-Schwarz inequality  implies  
\begin{equation}\label{eq:pairss}
|E|^{2}\leq |\pi^{W}(E)|\sum_{j=1}^{p^{m}} |E\cap (x_{W, j}+W)|^{2}.
\end{equation}
Note that $|E\cap (x_{W, j}+W)|^{2}$ is the amount of pairs of $E$ inside the coset $x_{W, j}+W$. Let $N\leq p^{m}$, define 
\[
\Theta=\{W\in G: |\pi^{W} (E)|\leq N\}.
\]
Summing two sides over 
$W\in \Theta$ in estimate \eqref{eq:pairss}, we obtain 
\begin{equation}\label{eq:argument}
|\Theta| |E|^{2} \leq \E(E,\Theta')N
\end{equation}
where $\E(E,\Theta')=\sum_{W\in \Theta}\sum_{j=1}^{p^{m}}|E\cap (x_{W, j}+W)|^{2}.$  
Thus the left problem is to estimate $\E(E, \Theta')$, and we use the doubling counting argument of Murphy and Petridis \cite[Lemma 1]{MurphyPetridis} and the discrete Plancherel identity. The above discusses motivated  the following definition.

%

\begin{definition}
Let $E\subset \F$ and $ \mathcal{A} \subset A(n,m)$. Define the  energy of $E$ on $\mathcal{A}$ as
\[
\E(E, \mathcal{A}) =\sum_{W\in \mathcal{A}} |E\cap W|^{2}.
\] 
\end{definition}
	
We note that $\E(E, \mathcal{A})$ is closely related to the incidence identity of Murphy and Petridis \cite[Lemma 1]{MurphyPetridis}, and the additive energy in additive combinatorics \cite[Chapter 2]{TaoVu}.

\subsection{Discrete Fourier transform}
In the following we collect some basic facts about Fourier transformation which related to our setting. For more details on discrete Fourier analysis, see Green \cite{Green}, Stein and Shakarchi \cite{Stein}. Let $f : \F\longrightarrow \mathbb{C}$ be a complex value function. Then for $\xi \in \F$ we define the Fourier transform 
\begin{equation*}\label{eq:dede}
\widehat{f}(\xi)=\sum_{x\in \F} f(x)e(-x\cdot \xi),
\end{equation*}  
where the dot product $ x\cdot\xi $ is defined as $ x_1\xi_1+\cdots +x_n\xi_n$ and $e(-x \cdot \xi)=\ee$. Recall the following Plancherel identity, 
\begin{equation*}
 \sum_{\xi \in \F}|\widehat{f}(\xi)|^{2}=p^{n}\sum_{x\in \F} |f(x)|^{2}.
\end{equation*} 
Specially for the subset $E\subset \F$, we have 
\[
\sum_{\xi \in \F} |\widehat{E}(\xi)|^{2}=p^{n}| E|.
\]
Here and in the following we use $E$ as characteristic function of the set  $E$.
 
 For $W\in G(n,n-m)$, define
\[
Per(W):=\{x\in \F: x\cdot w=0, w\in W\}.
\]
Note that if $W$ is some subspace in Euclidean space then $Per(W)$ is the orthogonal complement of $W$. 
Furthermore,  unlike in the Euclidean spaces,  here $W\cap Per(W)$ can be some non-trivial subspace.  However  the rank-nullity theorem of linear algebra implies  that for any subspace $W \subset\F$,
\begin{equation}\label{eq:rank}
\dim W+\dim Per(W)=n.
\end{equation}

The following Lemma \ref{lem:fff} of \cite[Lemma 2.3]{ChenP} plays an important role  in the proof of Lemma  \ref{lem:abstract} (2). For more details see \cite[Lemma 2.3]{ChenP}.

\begin{lemma} \label{lem:fff}
Use the above notation.  We have 
\begin{equation}\label{eq:kk}
\sum_{j=1}^{p^{m}} | E \cap (x_{j}+W)|^{2}=p^{-m}\sum_{\xi\in Per(W)} |\widehat{E}(\xi)|^{2}.
\end{equation}
\end{lemma}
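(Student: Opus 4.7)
The plan is to recognize the left-hand side as a pair-counting quantity and then expand the indicator function of $W$ via discrete Fourier inversion on $\F$; the only external input needed is the standard character orthogonality relation on a linear subspace.

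First I would observe that since the cosets $\{x_{j}+W\}_{j=1}^{p^{m}}$ partition $\F$, two points $x,y \in E$ lie in the same coset if and only if $x - y \in W$, so
\[
\sum_{j=1}^{p^{m}} |E \cap (x_{j}+W)|^{2} = \sum_{x,y \in E} \mathbf{1}_{W}(x-y).
\]

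Next I would compute $\widehat{\mathbf{1}_{W}}$. For $\xi \in Per(W)$ the character $w \mapsto e(-w\cdot\xi)$ is identically $1$ on $W$; for $\xi \notin Per(W)$ the $\FF_{p}$-linear map $w \mapsto w\cdot\xi$ is a surjection $W \to \FF_{p}$, so its fibers have equal size and the character sum vanishes. This gives $\widehat{\mathbf{1}_{W}}(\xi) = p^{n-m}\mathbf{1}_{Per(W)}(\xi)$. Applying Fourier inversion, together with the normalization $|W|/p^{n} = p^{-m}$ coming from the dimension relation \eqref{eq:rank}, yields
\[
\mathbf{1}_{W}(z) = p^{-m}\sum_{\xi \in Per(W)} e(z\cdot\xi).
\]

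Finally I would substitute this representation into the pair-sum, swap the order of summation, and recognize the inner factors as $\sum_{x \in E} e(x\cdot\xi) = \overline{\widehat{E}(\xi)}$ and $\sum_{y \in E} e(-y\cdot\xi) = \widehat{E}(\xi)$, whose product equals $|\widehat{E}(\xi)|^{2}$. This produces the right-hand side of \eqref{eq:kk} directly. There is essentially no obstacle: the whole argument is a few lines of Fourier bookkeeping, a localized version of Plancherel's identity on the subspace $Per(W)$. The only points requiring care are tracking the sign convention of the Fourier transform fixed in the paper and correctly identifying the Fourier support of $\mathbf{1}_{W}$ as $Per(W)$ rather than any Euclidean-style orthogonal complement.
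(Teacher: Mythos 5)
Your proof is correct. The paper itself gives no proof of Lemma \ref{lem:fff} --- it imports the identity from \cite[Lemma 2.3]{ChenP} --- but your argument (rewriting the left side as the pair count $\sum_{x,y\in E}\mathbf{1}_{W}(x-y)$, computing $\widehat{\mathbf{1}_{W}}=p^{n-m}\mathbf{1}_{Per(W)}$ via character orthogonality, and applying Fourier inversion) is precisely the standard computation behind that reference, with the normalizations $|W|=p^{n-m}$ and $\dim Per(W)=m$ handled consistently with the paper's conventions.
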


\begin{remark}
We note that the Lemma \ref{lem:fff} is the only place in this paper where the prime field $\mathbb{F}_{p}$ is needed. We do not know if the Lemma \ref{lem:fff} also holds for vector spaces over general finite fields. 
\end{remark}

In the following we extend a result of \cite[Lemma 3.1]{ChenP} to general subset of $\G$. Let $G\subset G(n,n-m)$, define
\begin{equation}\label{eq:define}
G'=\bigcup_{W\in G}\bigcup_{j=1}^{p^{m}}(x_{j,W}+W)
\end{equation}
where $x_{W, j}+W, 1\leq j\leq p^{m}$
are the cosets of $W$. For each $W$ we simply use $x_{W, j}+W, 1\leq j\leq p^{m}$ to represent the cosets of $W$.  

\begin{lemma}\label{lem:abstract}
Let $G$ be a subset of $G(n, n-m)$ with $|G|\gtrsim p^{\beta}$. 

(1) If for any $\xi\neq 0$,  
\begin{equation}\label{eq:l11}
|\{W\in G: \xi \in V\}|\lesssim |G| p^{-\beta},
\end{equation}
then 
\begin{equation}
\E(E,G')\lesssim |E||G|+|E|^{2}|G|p^{-\beta}.
\end{equation}

(2) If for any $\xi\neq 0$,
\begin{equation}\label{eq:l22}
|\{W\in G: \xi \in Per(W)\}|\lesssim |G| p^{-\beta},
\end{equation}
then 
\begin{equation}
\E(E,G')\lesssim p^{-m}|G|(|E|^{2}+|E|p^{n-\beta}).
\end{equation}
\end{lemma}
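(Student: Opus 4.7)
The plan is to prove the two parts by reducing the energy to a sum over a ``dual'' object and then invoke the respective hypothesis on the level sets of incidences.

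For part (1), I would use the identity of Murphy--Petridis. The key observation is that
\[
\sum_{j=1}^{p^{m}} |E\cap (x_{W,j}+W)|^{2}=|\{(x,y)\in E\times E : x-y\in W\}|,
\]
because pairs $(x,y)\in E\times E$ lying in the same coset of $W$ are exactly pairs with $x-y\in W$. Summing over $W\in G$ and swapping the order of summation,
\[
\E(E,G')=\sum_{(x,y)\in E\times E}|\{W\in G : x-y\in W\}|.
\]
I would then split the sum into the diagonal contribution (which gives $|E||G|$, since every $W$ contains $0$) and the off-diagonal contribution, where $\xi=x-y\neq 0$ and the hypothesis \eqref{eq:l11} bounds $|\{W\in G:\xi\in W\}|$ by $|G|p^{-\beta}$. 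This yields the claimed bound $|E||G|+|E|^{2}|G|p^{-\beta}$.

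For part (2), the natural dual tool is Lemma \ref{lem:fff}, which turns each inner sum of coset-intersections into a Fourier sum over $Per(W)$. Substituting and exchanging order of summation,
\[
\E(E,G')=p^{-m}\sum_{W\in G}\sum_{\xi\in Per(W)}|\widehat{E}(\xi)|^{2}=p^{-m}\sum_{\xi\in\F}|\widehat{E}(\xi)|^{2}\cdot|\{W\in G:\xi\in Per(W)\}|.
\]
Again I would isolate the $\xi=0$ term, which produces $p^{-m}|E|^{2}|G|$ (using $\widehat{E}(0)=|E|$ and $0\in Per(W)$ for every $W$). For the $\xi\neq 0$ terms I would apply the hypothesis \eqref{eq:l22} to pull out the factor $|G|p^{-\beta}$ and then dominate $\sum_{\xi}|\widehat{E}(\xi)|^{2}$ by $p^{n}|E|$ via Plancherel. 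Combining the two pieces gives the stated bound $p^{-m}|G|(|E|^{2}+|E|p^{n-\beta})$.

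Neither step looks like a serious obstacle: part (1) is a clean double-counting and part (2) is essentially Plancherel plus a level-set input. The only small subtlety is being careful that the statement in \eqref{eq:l11} should be read with $W$ in place of the stray ``$V$'' for the pairs argument to close; once that is understood, the proof is just two short identities followed by the two hypotheses.
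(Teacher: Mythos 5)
Your proposal is correct and follows essentially the same route as the paper: part (1) is the same diagonal/off-diagonal double counting (the paper phrases it by expanding $\bigl(\sum_{x\in E}V(x)\bigr)^{2}$ over the cosets $V\in G'$, which is equivalent to your pairs-with-difference-in-$W$ formulation), and part (2) is exactly the paper's argument via Lemma \ref{lem:fff}, isolating the $\xi=0$ term and applying the hypothesis \eqref{eq:l22} together with Plancherel. You are also right that the ``$V$'' in \eqref{eq:l11} should be read as ``$W$''.
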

\begin{proof}
The claim $(1)$ follows by doubling counting. Recall that we denote by $F(x)$ the characteristic function of the subset $F\subset \F$. Then
\begin{equation*}
\begin{aligned}
\E(E, G')&= \sum_{V\in G' }|E \cap V|^{2}\\
&=\sum_{V \in G'} \left(\sum_{x\in E}V(x) \right)^{2}\\
&=\sum_{V \in G'} \left(\sum_{x\in E}V(x)+\sum_{x\neq y \in E} V(x)V(y) \right)\\
&\lesssim |E||G|+|E|(|E|-1)|G|p^{-\beta}.
\end{aligned}
\end{equation*}

To establish  $(2)$, the Lemma \ref{lem:fff} implies 
\begin{equation*}
\begin{aligned}
\E(E, G')
&=\sum_{W\in G} \sum_{j=1}^{p^{m}}|E \cap (x_{W, j}+W)|^{2}\\
& =p^{-m}\sum_{W\in G} \sum_{\xi\in Per(W)}|\widehat{E}(\xi)|^{2}\\
& =p^{-m}(|G||E|^{2}+\sum_{W\in G} \sum_{\xi\in Per(W)\backslash \{0\}}|\widehat{E}(\xi)|^{2})\\
&\lesssim p^{-m}(|G||E|^{2}+p^{n}|E||G|p^{-\beta}).
\end{aligned}
\end{equation*}
Thus we finish the proof.
\end{proof}

\section{Proof of Theorem \ref{thm:main}}\label{sec:proof}

\subsection{Random subsets of $\G$}\label{sub:rrr}

We start by a description of these random subsets in $\G$. Let $0<\delta<1$. We choose each element of $\G$ with probability $\delta$ and remove it with probability $1-\delta$, all choices being independent of each other. Let $G=G^{\omega}$ be the collection of these chosen subspaces. Let $\Omega (\G, \delta)$ be our probability space which consists of all the possible outcomes of random subspaces.

For the convenience to our use, we formulate the following large deviations estimate. For more background and details on large deviations estimate, see Alon and Spencer \cite[Appendix A]{Alon}.

\begin{lemma}[Chernoff bound]\label{lem:law of large numbers}
Let $\{X_j\}_{j=1}^N$ be a sequence  independent Bernoulli random variables which takes value $1$ with probability $\delta$ and value $0$ with probability $1-\delta$. Then 
\[
\PP(  \sum^N_{j=1} X_j  \geq  3N\delta )\leq e^{-N\delta}.
\]
\end{lemma}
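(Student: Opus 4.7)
The plan is to prove this by the classical exponential-moment / Markov inequality argument that underlies all Chernoff-type bounds. First I would fix a parameter $t>0$ and exponentiate: for every such $t$,
\[
\PP\!\left( \sum_{j=1}^{N} X_j \geq 3N\delta \right) = \PP\!\left( e^{t \sum_j X_j} \geq e^{3N\delta t} \right) \leq e^{-3N\delta t}\, \EE\!\left[ e^{t \sum_j X_j} \right]
\]
by Markov's inequality applied to the non-negative random variable $e^{t\sum_j X_j}$. Independence of the $X_j$ then lets me factorise the expectation as $\prod_{j=1}^N \EE[e^{t X_j}]$.

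Next I would evaluate a single factor: since $X_j$ is Bernoulli with parameter $\delta$, one has $\EE[e^{t X_j}] = 1-\delta+\delta e^t = 1+\delta(e^t-1)$, and the elementary inequality $1+x \leq e^x$ upgrades this to $\EE[e^{t X_j}] \leq \exp(\delta(e^t-1))$. Substituting back yields, for every $t>0$,
\[
\PP\!\left( \sum_{j=1}^N X_j \geq 3N\delta \right) \leq \exp\!\left( N\delta(e^t-1) - 3N\delta t \right).
\]

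The final step is simply to pick $t$. A convenient explicit choice is $t = \ln 3$, which gives $e^t - 1 = 2$ and $3t = 3\ln 3$, so the exponent becomes $N\delta(2 - 3\ln 3)$. Since $3\ln 3 > 3$, this quantity is strictly less than $-N\delta$, and the desired bound $\PP\!\left(\sum_j X_j \geq 3N\delta\right) \leq e^{-N\delta}$ follows immediately. There is no genuine obstacle in this argument; it is a textbook Chernoff estimate, and the only point worth checking is the arithmetic that the numerical constant produced by optimising (or just evaluating at $t=\ln 3$) does beat the target $-1$ in front of $N\delta$. One could equally well set $t=1$, where $e-4 < -1$, or optimise exactly to obtain a slightly better constant, but the factor $3$ in the statement leaves plenty of slack.
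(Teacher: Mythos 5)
Your proof is correct, and it is exactly the standard exponential-moment (Markov inequality) argument; the paper itself gives no proof of this lemma, deferring instead to Alon and Spencer's appendix on large deviations, where the same textbook argument appears. The arithmetic also checks out: at $t=\ln 3$ the exponent is $N\delta(2-3\ln 3)\approx -1.30\,N\delta < -N\delta$, so the stated bound follows with room to spare.
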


We also need the following Lemma of \cite[Lemma 2.7]{ChenP}. 
\begin{lemma} \label{lem:c}
Let $\xi$ be a non-zero vector of $\F$.

(1) $|\{W\in G(n, k): \xi\in W\}|=|G(n-1, k-1)|$. 

(2) $|\{W\in G(n, k): \xi\in Per(W)\}|=|G(n-1, k)|$.
\end{lemma}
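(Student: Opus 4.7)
The plan is to prove both counting identities by identifying each set with the full Grassmannian of an $(n{-}1)$-dimensional space, so that the count reduces to a known Grassmannian cardinality.

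For part (1), I would exhibit a bijection between $\{W\in G(n,k): \xi\in W\}$ and $G(n-1, k-1)$. Since $\xi\neq 0$, the line $\langle \xi\rangle$ is a one-dimensional subspace and $\F/\langle\xi\rangle$ is an $(n{-}1)$-dimensional $\mathbb{F}_p$-vector space. The map $W\mapsto W/\langle\xi\rangle$ sends $k$-dimensional subspaces containing $\xi$ to $(k{-}1)$-dimensional subspaces of the quotient, with inverse given by pulling back along the quotient projection. Bijectivity is a straightforward correspondence theorem check, and it yields $|\{W : \xi\in W\}| = |G(n-1, k-1)|$.

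For part (2), I would directly unwind the definition of $Per(W)$. By definition, $\xi\in Per(W)$ iff $\xi\cdot w = 0$ for every $w\in W$, equivalently $W\subset \xi^{\perp}$ where $\xi^{\perp} := \{x\in \F : x\cdot \xi = 0\}$. The linear functional $x\mapsto x\cdot\xi$ on $\F$ is nonzero since $\xi$ has at least one nonzero coordinate $\xi_j$, and evaluating on the standard basis vector $e_j$ gives $\xi_j\neq 0$. Hence $\xi^{\perp}$ has dimension $n-1$, and the $k$-dimensional subspaces of $\F$ lying in $\xi^{\perp}$ are precisely the $k$-dimensional subspaces of $\xi^{\perp}$ itself, of which there are $|G(n-1, k)|$.

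The main point to watch is in part (2): one must not confuse the standard bilinear form $x\cdot y$ over $\mathbb{F}_p$ with a nondegenerate inner product in the Euclidean sense, since for instance $\xi\cdot\xi$ may vanish even when $\xi\neq 0$ (as happens for $p=2$, $\xi=(1,1,0,\dots,0)$). This degeneracy is highlighted by the paper's remark that $W\cap Per(W)$ can be nontrivial, but it does not affect the argument: only surjectivity of the single functional $x\mapsto x\cdot\xi$, i.e.\ $\dim \xi^\perp = n-1$, is needed, and this follows immediately from $\xi\neq 0$. Once this is clear, both identities are routine consequences of dimension counting on the Grassmannian.
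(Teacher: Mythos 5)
Your proof is correct, and since the paper states this lemma only by citation to \cite[Lemma 2.7]{ChenP} without reproducing a proof, your argument supplies exactly the standard one: the quotient correspondence $W\mapsto W/\langle\xi\rangle$ onto $G(n-1,k-1)$ for part (1), and for part (2) the observation that $\xi\in Per(W)$ iff $W\subseteq \xi^{\perp}=Per(Span(\xi))$, a hyperplane of dimension $n-1$, so the count is $|G(n-1,k)|$. Your remark about the possible degeneracy of the bilinear form is well placed but, as you say, harmless here, since the argument uses only that the functional $x\mapsto x\cdot\xi$ is nonzero, hence $\dim\xi^{\perp}=n-1$ — the same fact the paper invokes via the rank-nullity identity \eqref{eq:rank}.
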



\begin{corollary}\label{co:cc}
For any $m<\alpha<m(n-m)$, there exists a subset $G\subset \G$ such that
$|G|\approx p^{\alpha}$  and for any $\xi\neq 0$,
\[
|\{W\in G: \xi \in W\}|\lesssim  |G|p^{-m}.
\]
\end{corollary}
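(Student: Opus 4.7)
The plan is to use the random model from Subsection \ref{sub:rrr}. I set $\delta := p^{\alpha-m(n-m)}$, which lies in $(0,1)$ since $\alpha < m(n-m)$, and let $G = G^{\omega}\subset G(n,n-m)$ be the corresponding random subset, where each $W$ is kept independently with probability $\delta$. Because $|G(n,n-m)| \approx p^{m(n-m)}$, I have $\EE|G| \approx p^{\alpha}$, and a standard two-sided Chernoff estimate (Lemma \ref{lem:law of large numbers} supplies the upper tail, and the lower tail is analogous) yields $|G|\approx p^{\alpha}$ with probability at least $1-e^{-cp^{\alpha}}$ for some absolute $c>0$.

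Next, for a fixed nonzero $\xi\in\F$, Lemma \ref{lem:c}(1) counts the number of $W$ in the full Grassmannian that pass through $\xi$:
\[
N:=|\{W\in G(n,n-m):\xi\in W\}|=|G(n-1,n-m-1)|\approx p^{m(n-m)-m}.
\]
Hence $X_\xi := |\{W\in G:\xi\in W\}|$ is a sum of $N$ independent Bernoulli($\delta$) random variables, with mean $N\delta\approx p^{\alpha-m}$. The hypothesis $\alpha>m$ makes this mean tend to infinity with $p$, and Lemma \ref{lem:law of large numbers} gives
\[
\PP\bigl(X_\xi\geq 3N\delta\bigr)\leq e^{-N\delta}\leq e^{-c'p^{\alpha-m}}
\]
for some $c' > 0$.

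I then take a union bound over the at most $p^n$ nonzero $\xi\in\F$: the probability that some $\xi$ violates $X_\xi\leq 3N\delta$ is at most $p^{n}\,e^{-c'p^{\alpha-m}}$, which tends to zero as $p\to\infty$ precisely because $\alpha>m$. Intersecting with the concentration event for $|G|$, one finds with positive probability some $G\subset G(n,n-m)$ satisfying both $|G|\approx p^{\alpha}$ and $X_\xi\lesssim p^{\alpha-m}\approx |G|p^{-m}$ for every nonzero $\xi$, as required. For the finitely many small primes $p$ outside the asymptotic regime, the statement holds after adjusting the implicit constants (for instance by taking $G=G(n,n-m)$).

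The main subtlety is the role of the hypothesis $\alpha>m$: it is exactly the threshold that makes each mean $N\delta$ large enough for the Chernoff tail to overwhelm the $p^{n}$ loss incurred in the union bound over $\xi$. Relaxing $\alpha\leq m$ would leave the expected fiber count bounded (or even $o(1)$), and this concentration strategy would collapse — an analogue of why the threshold $\alpha>\min\{m,n-m\}$ appears in Theorem \ref{thm:main}.
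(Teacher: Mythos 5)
Your proposal is correct and follows essentially the same route as the paper: the same random model with $\delta\approx p^{\alpha-m(n-m)}$, Lemma \ref{lem:c}(1) to count the fiber $G_{\xi}$, the Chernoff bound of Lemma \ref{lem:law of large numbers} for each fixed $\xi$, and a union bound over the $p^{n}$ frequencies, which succeeds exactly because $\alpha>m$. The only (cosmetic) difference is that the paper controls $|G|$ with Chebyshev's inequality rather than a two-sided Chernoff estimate, which changes nothing in the conclusion.
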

\begin{proof}
We consider the random model $\Omega(\G, \delta)$ where $\delta=|G(n,m)|^{-1}p^{\alpha}$.  First observe that $p^{\alpha}/2 \leq |G| \leq 2 p^{\alpha}$ with high probability ($>1/2$) provided large $p$. This follows by applying Chebyshev's inequality, which says that
\begin{equation}\label{eq:che}
\begin{aligned}
\PP(||G| - p^{\alpha}|&> \frac{1}{2}p^{\alpha})\leq \frac{4p^{\alpha}(1-\delta)}{p^{2\alpha}}\\
&\leq  \frac{4}{p^{\alpha}}\rightarrow 0 \text{ as } p \rightarrow \infty.
\end{aligned}
\end{equation}

Let $\xi\neq 0$ and $G_{\xi}:=\{W\in \G: \xi \in W\}$. Lemma \ref{lem:c} (1) implies that 
\[
|G_{\xi}|=|G(n-1,n-m-1)|\approx p^{m(n-m)-m}.
\]
Observe that for $G\in \Omega(\G,\delta)$,
\[
|\{W\in G: \xi \in W\}|=\sum_{W\in G_{\xi}}{\bf 1}_{G}(W).
\]
Thus by Lemma \ref{lem:law of large numbers}, 
\[
\PP(\sum_{W\in G_{\xi}}{\bf 1}_{G}(W)\geq 3|G(n-1,m)|\delta)\leq e^{-Cp^{\alpha-m}}
\]
where $C$ is a positive constant. It follows that 
\begin{equation*}
\begin{aligned}
\PP(\exists \xi\neq 0, s.t. \sum_{W\in G_{\xi}}&{\bf 1}_{G}(W) \geq 3|G(n-1,m)|\delta)\\
&\leq p^{n}e^{-Cp^{\alpha-m}}\rightarrow 0 \text{ as } p \rightarrow \infty.
\end{aligned}
\end{equation*}
Together with the estimate \eqref{eq:che}, we conclude that $G\in \Omega(\G, \delta)$ satisfies our need with high probability (at least one) provided $p$ is large enough.  
\end{proof}

\begin{corollary}\label{co:ccc}
For any $n-m<\alpha<m(n-m)$, there exists a subset $G\subset \G$ such that
$|G|\approx p^{\alpha}$  and for any $\xi\neq 0$, 
\[
|\{W\in G: \xi \in Per(W)\}|\lesssim  |G|p^{-(n-m)}.
\]
\end{corollary}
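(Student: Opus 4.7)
The plan is to mirror the argument used for Corollary \ref{co:cc}, simply interchanging the role of the subspace $W$ with its perpendicular $Per(W)$ and using part (2) of Lemma \ref{lem:c} in place of part (1). So I would again work in the random model $\Omega(G(n,n-m),\delta)$ with the same threshold probability $\delta = |G(n,n-m)|^{-1}p^{\alpha}$, and show that a random $G$ in this model has the desired properties with positive probability provided $p$ is sufficiently large.

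First, exactly as in \eqref{eq:che}, Chebyshev's inequality gives $p^{\alpha}/2 \leq |G| \leq 2p^{\alpha}$ with probability tending to $1$ as $p\to\infty$. Next, for each nonzero $\xi \in \F$, I would set
\[
G_{\xi} := \{W \in G(n,n-m) : \xi \in Per(W)\},
\]
and now apply Lemma \ref{lem:c} (2) to obtain $|G_{\xi}| = |G(n-1,n-m)| \approx p^{(n-m)(m-1)} = p^{m(n-m) - (n-m)}$. The point of the chosen $\delta$ is that $|G_{\xi}|\delta \approx p^{\alpha - (n-m)}$, which tends to infinity because of the hypothesis $\alpha > n-m$.

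Then the Chernoff bound in Lemma \ref{lem:law of large numbers}, applied to the Bernoulli indicators $\mathbf{1}_{G}(W)$ for $W \in G_{\xi}$, yields
\[
\PP\Bigl(\sum_{W \in G_{\xi}} \mathbf{1}_G(W) \geq 3|G_{\xi}|\delta \Bigr) \leq e^{-|G_{\xi}|\delta} \leq e^{-Cp^{\alpha-(n-m)}}
\]
for some constant $C>0$. Taking a union bound over the at most $p^n$ nonzero choices of $\xi$, the probability that the bad event occurs for some $\xi$ is at most $p^{n}e^{-Cp^{\alpha-(n-m)}}$, which again tends to $0$ as $p\to\infty$ since $\alpha > n-m$. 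Combining with the cardinality estimate for $|G|$, a random $G \in \Omega(G(n,n-m),\delta)$ will satisfy both $|G|\approx p^{\alpha}$ and the required uniform bound $|\{W\in G : \xi \in Per(W)\}| \lesssim |G_{\xi}|\delta \approx |G|p^{-(n-m)}$ with high probability.

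There is no genuine obstacle here beyond the bookkeeping of exponents: the only difference from Corollary \ref{co:cc} is that $|G(n-1,n-m-1)|$ gets replaced by $|G(n-1,n-m)|$, which changes the saving from $p^{-m}$ to $p^{-(n-m)}$ and explains why the hypothesis switches from $\alpha > m$ to $\alpha > n-m$ (so that $|G_{\xi}|\delta \to \infty$ and the Chernoff tail beats the $p^n$ from the union bound). One small sanity check I would carry out is that the upper assumption $\alpha < m(n-m)$ guarantees $\delta < 1$, so the random model is well-defined.
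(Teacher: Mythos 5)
Your proof is correct and follows essentially the same route as the paper: the same random model $\Omega(G(n,n-m),\delta)$ with $\delta=|G(n,m)|^{-1}p^{\alpha}$, Lemma \ref{lem:c}~(2) giving $|G_{\xi}|=|G(n-1,n-m)|\approx p^{m(n-m)-(n-m)}$, and then the Chebyshev--Chernoff--union-bound argument of Corollary \ref{co:cc}, where $\alpha>n-m$ makes the tail $p^{n}e^{-Cp^{\alpha-(n-m)}}$ vanish. Indeed, the paper's own proof simply cites Lemma \ref{lem:c}~(2) and defers to ``the similar argument'' from Corollary \ref{co:cc}, which is exactly what you carried out explicitly.
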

\begin{proof}
We consider the random model $\Omega(\G, \delta)$ where $\delta=|G(n,m)|^{-1}p^{\alpha}$. For any $\xi\neq 0$, Lemma \ref{lem:c} (2) implies that 
\[
|\{W\in G(n, n-m): \xi \in Per(W)\}|=|G(n-1, n-m)|\approx p^{m(n-m)-(n-m)}.
\]
Then applying the similar argument to the proof of Corollary \ref{co:cc}, we obtain  that $G\in \Omega(\G, \delta)$ satisfies our need
with high probability provided $p$ is large enough.
\end{proof}

Now we intend to apply Lemma \ref{lem:abstract} and the above two Corollaries to prove Theorem \ref{thm:main}.

\begin{proof}[Proof of Theorem \ref{thm:main}]
Suppose $\alpha>m$. By Corollary \ref{co:cc} there exists a subset $G\subset \G$ such that $|G|\approx p^{\alpha}$  and for any $\xi\neq 0$,
\[
|\{W\in G: \xi \in W\}|\lesssim  |G|p^{-m}.
\]
Applying Lemma \ref{lem:abstract} (1), we obtain that for any $E\subset \F$,
\[
\E(E,G')\lesssim |G|(|E|+|E|^{2}p^{-m}).
\]
By estimate \eqref{eq:argument} we obtain
\[
|\{W\in G: |\pi^{W}(E)|\leq N\}|\lesssim |G|N(|E|^{-1}+p^{-m}). 
\]

For the case $\alpha>n-m$. By Corollary \ref{co:ccc} there exists a subset $G\subset G(n,n-m)$ with $|G|\approx p^{\alpha}$ and for any $\xi\neq 0$,
\[
|\{W\in G: \xi \in Per(W)\}|\lesssim  |G|p^{n-m}.
\]
Applying Lemma \ref{lem:abstract} (2), we obtain that for any $E\subset \F$,
\[
\E(E,G')\lesssim |G|(|E|+|E|^{2}p^{-m}).
\]
Again by  estimate \eqref{eq:argument}, we obtain
\[
|\{W\in G: |\pi^{W}(E)|\leq N\}|\lesssim |G|N(|E|^{-1}+p^{-m}). 
\]
Thus we complete the proof. 
\end{proof}

\section{Examples}

We show two examples in the following. For $D\subset \F$  let $G_{D}$ be
the collection of one dimensional subspaces which intersects $D$, i.e.,
\[
G_{D}=\{kx: x\in D, k\in \mathbb{F}_{p}\}.
\]

\begin{example}\label{exa:1}
Let $
S_{1}=\{(x_{1}, x_{2}, 1)\in \FF_{p}^{3}: x_{1}^{2}+x_{2}^{2}=1\}$.  Then for any $E\subset \mathbb{F}_{p}^{3}$,
\[
|\{L\in G_{S_{1}}: |\pi^{L}(E)|\leq N\}|\lesssim |S_{1}|N(p^{-2}+|E|^{-1}).
\]
\end{example}
\begin{proof}
A. Iosevich and M. Rudnev \cite[Lemma 2.2]{IosevichRudnev} proved that $|S_{1}|\approx p$, and hence $|G_{S_{1}}|\approx p$. Observe that  $|W\cap S_{1}|\lesssim 1$ for any $W\in G(3,2)$.

For $\xi\neq 0$ let $Span(\xi)=\{k\xi: k\in \mathbb{F}_{p}\}$. Then   
\[
\{L\in G_{S_{1}}: \xi \in Per(L)\}=G_{S_{1}}\cap Per(Span(\xi)).
\]
The rank-nullity theorem implies that $\dim Per(Span(\xi))=2$.  Thus $Per(Span(\xi))\in G(3,2)$, and hence we obtain 
\[
|\{L\in G_{S_{1}}: \xi \in Per(L)\}|\lesssim 1.
\]
Applying estimate \eqref{eq:argument} and Lemma \ref{lem:abstract} (2) with $\beta=1, m =2$, we finish the proof.
\end{proof}  

Note that the above example $S_{1}$  can be considered as a finite fields version of curve 
\[
\Gamma=\{\frac{1}{\sqrt{2}}(\cos t, \sin t, 1): t\in [0, 2\pi])\} \subset \RR^{3}. 
\]
For more details on restricted families of projections with respect to $\Gamma$ we refer to \cite{KOV}, \cite{OV}. In the following, we show  a finite fields version of  curve 
\[
\{(t,t^{2},\cdots, t^{n}): t\in [0,1]\}\subset \RR^{n}. 
\]

\begin{example}\label{ex:ex}
Let $S=\{( a, a^{2} \cdots, a^{n}): a \in \FF_{p}\backslash \{0\}\}$. Then  $|G_{S}|=p-1$ and for any subset  $E\subset \F$,
\[
|\{L\in G_{S}: |\pi^{L}(E)|\leq N\}|\lesssim |G_{S}|N(|E|^{-1}+p^{-(n-1)}).
\]
\end{example}
\begin{proof}
For $n=2$ we have $|G_{S}|\approx |G(2,1)|\approx p$, and the claim  follows by applying Theorem \ref{thm:ChenP}. In the following we fix $n\geq 3$ and let $p$ be a large prime number.

For any $\xi\neq 0$,  
\[
\{L\in G_{S}:\xi \in Per(L) \}=G_{S}\cap Per(Span(\xi)).
\]
The rank-nullity theorem implies that $\dim Per(Span(\xi))=n-1$. 
Observe that any $n$ elements of  $S$ form a nonsingular Vandermonde  matrix, and hence these $n$ vectors are linear independent. It follows that for any hyperplane $W\in G(n,n-1)$,
\[
|W\cap S|\leq n-1\lesssim_{n} 1.
\]
Therefore we obtain
\[
|\{L\in G_{S}: \xi \in Per(L) \}|\lesssim_{n}1.
\] 
Applying estimate \eqref{eq:argument} and  Lemma \ref{lem:abstract} (2) with $\beta=1, m=n-1$, we finish the proof.
\end{proof}

By the special choices of $N$ in the above two examples, we conclude that Marstrand-Mattila type projection theorem hold for the restricted families $(\pi^{L})_{L\in G_{S_{1}}}$ and $(\pi^{L})_{L\in G_{S}}$.


\end{document}